\title{Divisors of Fourier coefficients of modular forms}
\author{Sanoli Gun and M. Ram Murty}
\address{Sanoli Gun\\ 
Institute for Mathematical Sciences, CIT Campus, Taramani, 
Chennai, 600 113, India}
\email{sanoli@imsc.res.in}
\address{M. Ram Murty\\  
Department of Mathematics, Queen's University, Kingston,
Ontario, K7L 3N6, Canada.}
\email{murty@mast.queensu.ca}
\thanks{ Research of the first author was partially 
supported by IMSc Number Theory grant.
Research of the second author was partially supported 
by an NSERC Discovery grant.}
\keywords{Divisor function, ~Fourier coefficients of modular forms, 
~generalized Riemann hypothesis, ~Chebotarev density theorem}
\subjclass{11F30, 11N37}
\newcommand{\bZ}{{\mathbb Z}}
\newcommand{\bQ}{{\mathbb Q}}
\newcommand{\bR}{{\mathbb R}}
\newcommand{\bC}{{\mathbb C}}
\newcommand{\bF}{{\mathbb F}}
\newcommand{\C}{{\mathcal C}}
\renewcommand{\H}{{\mathcal H}}
\newcommand{\F}{{\mathbb F}}
\newtheorem{thm}{Theorem}
\newtheorem{lem}[thm]{Lemma}
\newtheorem{rmk}[thm]{Remark}
\newcommand{\lemref}[1]{Lemma~\ref{#1}}
\begin{document}

\begin{abstract}
Let $d(n)$ denote the number of divisors of $n$. 
In this paper, we study the average value of $d(a(p))$,
where $p$ is a prime and $a(p)$ is the $p$-th  Fourier coefficient 
of a normalized Hecke eigenform of weight $k \ge 2$ for $\Gamma_0(N)$ 
having rational integer Fourier coefficients. 
\end{abstract}

\maketitle   

\tableofcontents

\section{Introduction}

Throughout the paper, let $p, \ell$ be primes, 
$\H = \{z \in \C \mid \Im(z) > 0\}$
be the upper half plane. Also let
$N \ge 1$ be a natural number and $k \ge 2$ be an even integer.
Let $\pi(x)$ denote the usual prime counting function up to $x$.
Let $f$ be a normalized Hecke eigen cusp form of weight $k$ for $\Gamma_0(N)$ 
with Nebentypus $\chi$. Suppose that the Fourier expansion of 
$f$ at $i\infty$ is
$$
f(z) = \sum_{n \ge 1} a(n) q^n, 
$$
where $q = e^{2\pi i z}$. In this paper, we assume that $a(n)$ are 
rational integers.  The second author and  Kumar Murty \cite{MM}
considered the average value of  $\nu(a(n))$, where 
$\nu(n)$ is the number of distinct prime divisors of $n$.
In this paper, we investigate the sum $\sum_ {p \leq x} d(a(p))$,
where $d(n) = \sum_{\delta | n} 1$.
An essential ingredient in our work is a technique that can be traced 
back to van der Corput \cite{VC} who majorized the divisor 
function by short sums. This technique was later refined 
by many authors. We use a refinement due to Friedlander and 
Iwaniec \cite{FI}. Our result can be thought of as a
modular analogue of a result of Erd\"os \cite{PE}
who considered the asymptotics of 
$\sum_{n \leq x} d(F(n))$, where $F(x)$ is an
irreducible polynomial with integral coefficients.
Finding average value of divisors
of arithmetic functions has a long history. Some of the relevant
papers in this direction are \cite{PE}, \cite{FI} and \cite{LP}.

\smallskip

\section{Preliminaries and Statement of the result}

\smallskip

For an integer $\delta \ge 1$ and $x \in \bR$, set
\begin{eqnarray*}
\pi^{*} (x, \delta) &=& \# \{ p \leq x \mid  a(p) 
\equiv 0\!\!\! \pmod{\delta} \} \\
\text { and}\phantom{m} \pi (x, \delta) &=& 
\# \{ p \leq x \mid  a(p) \ne 0, 
~a(p) \equiv 0\!\!\! \pmod{\delta} \}.
\end{eqnarray*}

As before, let $f(z)= \sum_{n\ge 1} a(n) q^n$ be a 
normalized Hecke eigenform of 
weight $k$ for $\Gamma_0(N)$
with Nebentypus $\chi$ and rational integer 
Fourier coefficients. For a 
prime $\ell$, let  $\bZ_\ell$ denote the ring
of $\ell$-adic integers and ${\rm G}:=  
{\rm Gal}({\overline\bQ}/{\bQ})$. 
By the work of Deligne \cite{PD} 
and as the Fourier 
coefficients of $f$ are integers,  
there is a continuous representation 
$$
\rho_\delta: {\rm G} \to {\rm GL_2}
\big(\prod_{\ell | \delta}\bZ_{\ell}\big)
$$
(where the product is over distinct prime divisors)
for any positive integer $\delta > 1$.
This representation is unramified outside 
the primes dividing $\delta N$ and for $p \nmid N\delta$,
$$
\text{ tr }\rho_\delta(\sigma_p) = a(p), 
\phantom{mm} \text{det }\rho_\delta(\sigma_p) = \chi(p) p^{k-1},
$$
where $\sigma_p$ is a Frobenius element of $p$ in $G$
and $\bZ$ is embedded diagonally in 
$\prod_{\ell | \delta}{\bZ}_{\ell}$. 
Denote by $\tilde\rho_\delta$ the reduction
modulo $\delta$ of $\rho_\delta$:
$$
\tilde\rho_\delta: {\rm G} \xrightarrow{\rho_\delta} 
{\rm GL_2}\big(\prod_{\ell | \delta}\bZ_{\ell}\big) 
\twoheadrightarrow  {\rm GL_2}\big(\bZ/\delta\big).
$$
Let ${\rm H}_\delta$ be the kernel of $\tilde\rho_{\delta}$, 
${\rm K}_\delta$ the sub field of 
$\overline\bQ$ fixed by ${\rm H}_\delta$
and ${\rm G}_\delta = {\rm Gal}({\rm K}_\delta/\bQ)$. 
Let ${\rm C}_\delta$ be the subset of 
${\tilde\rho_\delta}(G)$
consisting of elements of trace zero and let 
$h(\delta) = |{\rm C}_\delta|/ |{\rm G}_\delta|$.

The condition $a(p) \equiv 0\!\!\pmod{\delta}$, 
where $(p, \delta{\rm N}) = 1$ means that for any 
Frobenius element $\sigma_p$  of $p$, 
$\tilde{\rho_\delta}(\sigma_p) \in {\rm C}_\delta$. Hence  
by the Chebotarev density theorem applied to 
${\rm K}_\delta/ \bQ$, we have
$$
\pi^*(x,\delta) \sim  \frac{|{\rm C}_\delta|}
{|{\rm G}_\delta|} \pi(x) = h(\delta) \pi(x).
$$
As ${\rm C}_\delta$ contains the image 
of complex conjugation, it is non-empty.
Note that $K_{\ell_1^{n_1}} \cap K_{\ell_2^{n_2}} = \bQ$
for distinct primes $\ell_1,\ell_2$ and natural numbers $n_1, n_2$.
This implies that $h(\delta) = \prod_{\ell^n || \delta} h(\ell^n)$,
where $\ell^n || \delta$ means that $\ell^n | \delta$
and $\ell^{n+1} \nmid \delta$. 

Now suppose that the Generalized Riemann 
Hypothesis (GRH) i.e. the Riemann 
Hypothesis for all Artin $L$-series is true.
Then by the works of Lagarias and Odlyzko \cite{LO}, 
one can show that
$$
\pi^*(x, \delta) = h(\delta) \pi(x) + 
O\left(\delta^3 x^{1/2} \log(\delta {\rm N}x)\right).
$$ 
An improved error term is available provided
one also assumes the Artin holomorphy 
conjecture as proved by M. R. Murty, V. K. Murty 
and N. Saradha \cite{MMS}.
Moreover, if we define
$$
{\rm Z}(x) = \{ p \le x \mid a(p) = 0 \}
$$
then as mentioned in \cite{MM} one can show 
the following lemma from the works 
of Ribet \cite{KR} and Serre \cite{JP};
\begin{lem}\label{lemma2}
Suppose that $f$ does not have 
complex multiplication. Then 
${\rm Z}(x) \ll x/ (\log x)^{3/2 - \epsilon}$ for 
all $\epsilon > 0$.
Further, suppose that GRH is true.
Then ${\rm Z}(x) \ll x^{3/4}$. 
\end{lem}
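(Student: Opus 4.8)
The plan is to bound ${\rm Z}(x)$ by the number of primes whose Frobenius has trace zero modulo a single well-chosen prime $\ell$, and then to optimise the choice of $\ell$; I treat the two assertions in turn, beginning with the cleaner conditional one. The starting point is the elementary remark that if $a(p) = 0$ then $a(p) \equiv 0 \pmod{\ell}$ for every prime $\ell$, so that for any $p \nmid N\ell$ the vanishing $a(p) = 0$ forces $\tilde\rho_\ell(\sigma_p) \in {\rm C}_\ell$. Hence
\[
{\rm Z}(x) \le \pi^*(x, \ell) + O(1)
\]
for every prime $\ell$, the $O(1)$ absorbing the finitely many primes dividing $\ell N$. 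I would then quantify $h(\ell)$: since $f$ has no complex multiplication, the theorems of Ribet and Serre on the largeness of the $\ell$-adic image show that for all but finitely many $\ell$ the image ${\rm G}_\ell = \tilde\rho_\ell({\rm G})$ contains $\mathrm{SL}_2(\bF_\ell)$, so that $|{\rm G}_\ell| \asymp \ell^3\,|D_\ell|$, where $D_\ell$ is the image of the determinant. A direct count of the trace-zero matrices of each fixed nonzero determinant gives $|{\rm C}_\ell| \ll \ell^2\,|D_\ell|$, whence $h(\ell) = |{\rm C}_\ell|/|{\rm G}_\ell| \ll 1/\ell$ for all large $\ell$. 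Thus the main term $h(\ell)\pi(x)$ is $\ll x/(\ell\log x)$, and the whole game is to take $\ell$ as large as the available form of the Chebotarev theorem permits.

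Under GRH I would invoke the sharper effective Chebotarev density theorem alluded to above (the Murty--Murty--Saradha refinement, valid under GRH together with the Artin holomorphy of the relevant two-dimensional representations). The decisive gain is that its error term is governed by the square root of the analytic conductor of $\tilde\rho_\ell$, a quantity that is $\ll \ell$, rather than by the degree $\ll \ell^4$ of ${\rm K}_\ell$; this gives
\[
\pi^*(x, \ell) = h(\ell)\pi(x) + O\big(\ell\, x^{1/2}\log(\ell N x)\big).
\]
Balancing the main term $x/(\ell\log x)$ against this error leads to the choice $\ell \asymp x^{1/4}/\log x$, and substituting back gives ${\rm Z}(x) \ll x^{3/4}$. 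With only the cruder error term $O(\ell^3 x^{1/2}\log(\ell N x))$ recorded in the excerpt one would reach no better than $x^{7/8}$, which is precisely why the sharper input is needed here.

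For the unconditional bound I would instead apply the unconditional effective Chebotarev density theorem of Lagarias and Odlyzko to ${\rm K}_\ell/\bQ$, which is ramified only at primes dividing $\ell N$. Since $[{\rm K}_\ell : \bQ] \ll \ell^4$ and $\log(\mathrm{disc}\,{\rm K}_\ell) \ll \ell^4\log(\ell N)$, a short analysis shows that its error term is negligible against the main term $\asymp x/(\ell\log x)$ exactly in a range of the shape $\ell \ll (\log x)^{1/2 - \epsilon}$. Choosing $\ell$ as large as this range permits yields
\[
{\rm Z}(x) \le \pi^*(x,\ell) \ll h(\ell)\pi(x) \ll \frac{\pi(x)}{\ell} \ll \frac{x}{(\log x)^{3/2 - \epsilon}},
\]
which is the asserted estimate. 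The one genuine subtlety is a possible Siegel (exceptional real) zero of the Dedekind zeta function of ${\rm K}_\ell$; following Serre, at most one $\ell$ in any dyadic block can produce such a zero and its contribution can be separated off, so it affects neither the range nor the exponent beyond the harmless $\epsilon$.

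The main obstacle is the unconditional half. There one must run Chebotarev uniformly in $\ell$ even as the degree and discriminant of ${\rm K}_\ell$ grow polynomially in $\ell$, and simultaneously dispose of the exceptional zero; it is exactly the interplay between the admissible range $\ell \ll (\log x)^{1/2-\epsilon}$ and the size $\asymp x/(\ell\log x)$ of the main term that manufactures the exponent $3/2 - \epsilon$, and any weakening of the effective Chebotarev input degrades it. The one ingredient that genuinely has to be proved rather than quoted is the big-image bound $h(\ell) \ll 1/\ell$, which is exactly where the absence of complex multiplication is used.
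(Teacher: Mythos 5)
You should first note that the paper itself offers no proof of this lemma: it is quoted from Murty--Murty \cite{MM} and rests on the image theorems of Ribet and on Serre's effective Chebotarev arguments in \cite{JP}, so your attempt has to be measured against that intended argument. Your GRH half has the right architecture (reduce to $\pi^*(x,\ell)$, use the large-image input to get $h(\ell)\ll 1/\ell$, balance $x/(\ell\log x)$ against an error $\ll \ell\, x^{1/2}\log(\ell N x)$ at $\ell\asymp x^{1/4}$), but the unconditional half contains a genuine quantitative gap. With $n_{K_\ell}=[{\rm K}_\ell:\bQ]\asymp \ell^4$ and $\log d_{{\rm K}_\ell}\ll \ell^4\log(\ell N)$ --- your own figures --- the Lagarias--Odlyzko unconditional theorem does not permit anything like $\ell\ll(\log x)^{1/2-\epsilon}$: its range of validity requires roughly $\log x\gg n_K(\log d_K)^2\asymp \ell^{12}(\log\ell)^2$, i.e. $\ell\ll(\log x)^{1/12}$, and even if one ignores that condition entirely, making the error term $x\exp\bigl(-c\,n_K^{-1/2}(\log x)^{1/2}\bigr)$ beat the main term $\asymp x/(\ell\log x)$ forces $(\log x)^{1/2}/\ell^{2}\gg\log\log x$, i.e. $\ell\ll(\log x)^{1/4}$. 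So the route you describe tops out at $x/(\log x)^{5/4}$ at the very best (and $x/(\log x)^{13/12}$ if the validity condition is respected); the ``short analysis'' you appeal to in fact contradicts the claimed range, and the exponent $3/2-\epsilon$ cannot be manufactured this way. The missing idea, which is Serre's and is precisely what \cite{MM} imports, is that trace-zero elements of ${\rm GL}_2(\bF_\ell)$ are special: by Cayley--Hamilton their square is scalar, so they define involutions in ${\rm PGL}_2(\bF_\ell)$ and lie in normalizers of Cartan subgroups. This allows the condition $a(p)\equiv 0\pmod{\ell}$ to be detected in subextensions of degree $O(\ell^2)$ rather than $\ell^4$, and it is exactly this drop in degree that widens the admissible range to $\ell\ll(\log x)^{1/2-\epsilon}$ and yields ${\rm Z}(x)\ll x/(\log x)^{3/2-\epsilon}$.

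A secondary, more repairable, mismatch occurs in your conditional half: you invoke the Murty--Murty--Saradha error term, which as you yourself note requires Artin holomorphy in addition to GRH, whereas the lemma assumes GRH alone (the paper is explicit that the improved error term is only available ``provided one also assumes'' Artin). The same structural observation rescues this: since the trace-zero set is stable under scalars and sits inside Cartan normalizers, its detection can be carried out with characters induced from abelian (Cartan-normalizer) data, i.e. monomial characters, whose Artin $L$-functions are Hecke $L$-functions and hence entire, so GRH alone suffices --- this is how Serre obtains $x^{3/4}$ unconditionally on Artin. Your group-theoretic input is fine and consistent with the paper's Section 3 computation ($h(\ell)\asymp \ell/(\ell^2-1)\asymp 1/\ell$ for large $\ell$, with non-CM entering through the large-image theorems), but as written neither half of your argument proves the lemma as stated: the unconditional bound fails quantitatively, and the conditional one proves the assertion only under a strictly stronger hypothesis.
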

If $f$ has complex multiplication, 
then ${\rm Z}(x) \sim \frac{1}{2}\pi(x)$.
Now suppose that GRH is true. Then 
as noted by the second author 
and Kumar Murty \cite{MM}, one has: 
\begin{lem}\label{lemma3}
Suppose that $f$ does not have complex 
multiplication and GRH is true. Then for $x \ge 2$,
$$
\pi(x,\delta) = h(\delta)\pi(x) + 
O\left( \delta^3 x^{1/2} \log(\delta{\rm N} x)\right) 
+ O(x^{3/4}). 
$$ 
\end{lem}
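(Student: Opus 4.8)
The plan is to reduce the claimed asymptotic to the two inputs already recorded above: the GRH-conditional effective Chebotarev estimate for $\pi^*(x,\delta)$ and the bound on the exceptional set ${\rm Z}(x)$ from \lemref{lemma2}. First I would observe that the counting functions $\pi^*(x,\delta)$ and $\pi(x,\delta)$ differ only in whether the vanishing case $a(p)=0$ is admitted. Since $a(p)=0$ trivially forces $a(p)\equiv 0\pmod{\delta}$, every prime $p\le x$ counted by $\pi^*$ but not by $\pi$ is precisely one for which $a(p)=0$. Hence
$$
\pi^*(x,\delta) - \pi(x,\delta) = \#\{p\le x \mid a(p)=0\} = {\rm Z}(x),
$$
so that $\pi(x,\delta) = \pi^*(x,\delta) - {\rm Z}(x)$.

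Next I would substitute the two estimates. Under GRH, the Lagarias--Odlyzko form of Chebotarev's theorem applied to ${\rm K}_\delta/\bQ$, as quoted above, gives
$$
\pi^*(x,\delta) = h(\delta)\pi(x) + O\bigl(\delta^3 x^{1/2}\log(\delta{\rm N}x)\bigr),
$$
while \lemref{lemma2}, under the hypotheses that $f$ has no complex multiplication and that GRH holds, yields ${\rm Z}(x) \ll x^{3/4}$. Combining these with the identity of the previous paragraph, the vanishing term ${\rm Z}(x)$ is absorbed into a fresh error of size $O(x^{3/4})$, and we obtain
$$
\pi(x,\delta) = h(\delta)\pi(x) + O\bigl(\delta^3 x^{1/2}\log(\delta{\rm N}x)\bigr) + O(x^{3/4}),
$$
which is exactly the asserted estimate.

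The only bookkeeping point meriting a remark is that the Chebotarev input is phrased for $p\nmid {\rm N}\delta$, whereas the counting functions range over all $p\le x$; but the primes dividing ${\rm N}\delta$ number at most $O(\log(\delta{\rm N}))$ and are comfortably swallowed by the error terms, so no correction is needed. Consequently the deduction itself is purely formal. The genuine content is carried entirely by the two cited inputs, and the non-CM hypothesis is used only to license the strong bound ${\rm Z}(x)\ll x^{3/4}$; this is the step one should regard as the true engine of the result, rather than any real obstacle in the present argument.
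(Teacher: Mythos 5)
Your proposal is correct and follows the same route the paper intends (the paper simply attributes the deduction to \cite{MM}): subtract the vanishing count from $\pi^*(x,\delta)$ via the identity $\pi(x,\delta)=\pi^*(x,\delta)-|{\rm Z}(x)|$, then invoke the Lagarias--Odlyzko effective Chebotarev estimate under GRH together with the bound ${\rm Z}(x)\ll x^{3/4}$ from \lemref{lemma2}. Your remark on the finitely many primes dividing $\delta{\rm N}$ being absorbed into the error term is also the right bookkeeping.
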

Also it follows from the works of Carayol \cite{HC}, Momose \cite{FM},
Ribet \cite{KR1, KR2}, Serre \cite{JP1} and Swinnerton-Dyer \cite{SD} that 
for $\ell$ sufficiently large,
$$
{\rm T}_{\ell}: =  \text{ Im }\rho_{\ell} 
= \left\{ \gamma \in {\rm GL}_2(\bF_{\ell})
 \mid \det \gamma \in (\bF_{\ell}^{\times})^{k-1} \right\} .
$$
In this paper, we prove:
\begin{thm}\label{thmain}
Assume that GRH is true. Also, assume that $f$ 
is a normalized Hecke eigen cusp 
form of weight $k$ for $\Gamma_0(N)$
with rational integer Fourier 
coefficients $\{a(n)\}$. Moreover, suppose that
$f$ does not have complex multiplication. We have
\begin{eqnarray}\label{main}
x \ll \sum_{p \leq x \atop a(p) \ne 0} 
d(a(p)) \ll x(\log x)^{A},
\end{eqnarray}
where $A$ is an absolute constant which depends on $f$.
\end{thm}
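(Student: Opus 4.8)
The plan is to treat the two inequalities separately; the lower bound is elementary, while the upper bound requires the divisor-majorization technique of van der Corput and Friedlander--Iwaniec.

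For the lower bound I would start from the trivial observation that for any real $T\ge 1$,
\[
d(a(p)) \ge \#\{\delta\le T : \delta\mid a(p)\},
\]
so that, interchanging the order of summation,
\[
\sum_{\substack{p\le x\\ a(p)\ne 0}} d(a(p)) \ge \sum_{\delta\le T}\pi(x,\delta).
\]
Now I insert \lemref{lemma3}, which gives $\pi(x,\delta)=h(\delta)\pi(x)+O(\delta^3x^{1/2}\log(\delta Nx))+O(x^{3/4})$. Summing over $\delta\le T$ produces the main term $\pi(x)\sum_{\delta\le T}h(\delta)$ together with a total error $O(T^4x^{1/2}\log(TNx))+O(Tx^{3/4})$. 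The key analytic input is that $\sum_{\delta\le T}h(\delta)\asymp \log T$: this follows because $h$ is multiplicative and strictly positive (as recorded before \lemref{lemma2}, using that $C_\delta$ contains the image of complex conjugation), and because the structure of $T_\ell$ forces $h(\ell)\asymp 1/\ell$ (counting trace-zero matrices of prescribed determinant in ${\rm GL}_2(\bF_\ell)$); hence the Dirichlet series $\sum_\delta h(\delta)\delta^{-s}$ has the same polar behaviour at $s=0$ as $\zeta(1+s)$, and partial summation yields the stated order. Choosing $T=x^{1/9}$ makes the error $o(x)$, while the main term is $\gg (x/\log x)\log T\gg x$. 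This gives the left-hand inequality.

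For the upper bound, Deligne's bound $|a(p)|\le 2p^{(k-1)/2}\le 2x^{(k-1)/2}=:M$ shows every divisor of $a(p)$ is at most $M$, and divisor pairing gives
\[
d(a(p))\le 2\,\#\{\delta\mid a(p):\delta\le \sqrt M\},\qquad\text{so}\qquad \sum_{\substack{p\le x\\ a(p)\ne 0}}d(a(p))\le 2\sum_{\delta\le\sqrt M}\pi(x,\delta).
\]
The small moduli $\delta\le x^{1/8}$ are harmless: \lemref{lemma3} bounds $\sum_{\delta\le x^{1/8}}\pi(x,\delta)$ by $\pi(x)\sum_{\delta\le x^{1/8}}h(\delta)+O(x^{1/2}\cdot x^{1/2}\log x)+O(x^{7/8})\ll x\log x$, using $\sum_{\delta\le x^{1/8}}\delta^3\ll x^{1/2}$ and $\sum_{\delta\le x^{1/8}}h(\delta)\ll\log x$.

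The remaining range $x^{1/8}<\delta\le\sqrt M$ is the heart of the matter and the place where \lemref{lemma3} is useless: its error term $\delta^3x^{1/2}$ already exceeds $x$ once $\delta$ is a small power of $x$, while $\sqrt M=x^{(k-1)/4}$ is a fixed power of $x$. This is exactly where the refinement of Friedlander and Iwaniec \cite{FI} enters: rather than estimating each $\pi(x,\delta)$ in the balanced range individually, one majorizes the divisor function so that the contribution of the middle divisors is expressed through short congruence sums with moduli at most $x^{1/8}$, to which \lemref{lemma3} does apply, at the cost of a fixed power $(\log x)^{A}$ (with $A$ depending on $k$ through the number $O(k)$ of multiplicative windows of ratio $x^{1/8}$ needed to cover $[x^{1/8},\sqrt M]$). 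I expect this step to be the main obstacle, and I would stress that it must be carried out on average rather than pointwise: an individual coefficient $a(p)$ can be a primorial-type integer with $d(a(p))$ as large as $x^{o(1)}$, far exceeding any power of $\log x$, so no pointwise short-sum bound can hold, and the savings emerge only after summation over $p\le x$, where such highly composite values are rare. Combining the three ranges then yields $\sum_{p\le x,\,a(p)\ne 0}d(a(p))\ll x(\log x)^{A}$, completing the proof.
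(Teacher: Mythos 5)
Your lower bound is essentially the paper's argument and is sound in outline, with one caveat you should repair: $h(\ell)\asymp 1/\ell$ is only known for \emph{sufficiently large} $\ell$, since the image of $\rho_\ell$ is full only for $\ell\gg 1$ (the description of ${\rm T}_\ell$ quoted before the theorem holds only for large $\ell$). The paper therefore restricts the moduli to $\delta$ all of whose prime divisors are large, which costs nothing in a lower bound because $h\ge 0$; your unrestricted claim that $\sum_{\delta\le T}h(\delta)\asymp\log T$ is not justified at small primes, though the direction you actually need, $\gg\log T$, survives after the restriction. Your choice $T=x^{1/9}$ and the error accounting via \lemref{lemma3} are fine.

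The upper bound, however, has a genuine gap: the range $x^{1/8}<\delta\le\sqrt M$, which you correctly identify as the heart of the matter, is never estimated — you only say you ``expect'' Friedlander--Iwaniec to handle it, and your description of how it would work is incorrect. \lemref{lem5} \emph{is} a pointwise bound: for every $n$ and every $m\ge 2$, taking $r=2$,
$$
d(n)\ \le \sum_{\delta\mid n,\ \delta\le n^{1/m}}(2d(\delta))^{c},\qquad c=\frac{m\log m}{\log 2},
$$
which is exactly the ``pointwise short-sum bound'' you assert cannot exist. Your primorial objection is answered by the weight $(2d(\delta))^{c}$, which is itself as large as $x^{o(1)}$ for highly composite $\delta$, so there is no contradiction; the averaging over $p$ is needed not to make the majorization true but to make the weights affordable. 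The paper's proof dispenses entirely with your decomposition (trivial pairing up to $\sqrt M$, then multiplicative windows): one chooses $m>7k$ so that $|a(p)|^{1/m}<x^{1/12}$, applies \lemref{lem5} pointwise to each $a(p)$, and swaps sums to get $\sum_{\delta<x^{1/12}}d(\delta)^{c}\,\pi(x,\delta)$, with \emph{all} moduli small enough for \lemref{lemma3}. The second ingredient missing from your sketch is the control of the resulting weighted sums: by Rankin's (Ingham's) trick, \lemref{lem7}, applied to $\sum_n d(n)^{c}h(n)n^{-s}=\zeta(s+1)^{2^{c}}g(s)$ and $\sum_n d(n)^{c}n^{-s}=\zeta(s)^{2^{c}}g_1(s)$, one gets $\sum_{\delta\le z}d(\delta)^{c}h(\delta)\ll(\log z)^{2^{c}}$ and $\sum_{\delta\le z}d(\delta)^{c}\ll z(\log z)^{2^{c}}$, whence the bound $x(\log x)^{2^{c}-1}$; note also that $A=2^{c}-1$ depends on $f$ through $k$ via $m>7k$, not through ``$O(k)$ windows'' as you suggest. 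Without these two steps your middle range is simply unproved, so the right-hand inequality of the theorem is not established by your argument.
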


\begin{rmk}
It is worth noting that above theorem is true unconditionally
when $f$ is a normalized Hecke eigen cusp form of $k=2$. 
Indeed, the estimate in \lemref{lemma3} is unconditional
if one is considering the case $k=2$.  For in this case, the modular
form corresponds to an elliptic curve by a 
celebrated theorem of Wiles \cite{wiles}
and subsequent work of Breuil, Conrad, Diamond and Taylor \cite{BCDT}.
With this theorem in hand, the primes enumerated by ${\rm Z}(x)$
are precisely the supersingular primes.  Indeed, based on a suggestion
of the second author, Elkies (see p. 25 of \cite{elkies1} 
and \cite{elkies2}) has shown unconditionally that the number
of supersingular primes is $O(x^{3/4})$. Thus, for $k=2$, we can dispense
with the GRH in \lemref{lemma3}.
\end{rmk}

In order to prove the theorem, the 
following lemmas play an important role.
The first one was proved by Friedlander 
and Iwaniec \cite{FI} and for
the proof of the second lemma we use Rankin's trick. 
But as Rankin \cite{RR} points out, it should really
be called Ingham's trick since Ingham told Rankin about it.
\begin{lem}\label{lem5}
Let $m, r \ge 2$ and $n\ge 1$. Then
$$ 
d_r(n) \le \sum_{\delta|n \atop \delta \le n^{1/m}} 
(2d(\delta))^{(r-1)\frac{m \log m}{\log 2}},
$$
where 
$$
d_r(n)= \sum_{n_1 \cdots n_r = 
n \atop n_1, \cdots, n_r \ge 1} 1.
$$
\end{lem}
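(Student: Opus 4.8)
The plan is to avoid estimating $d_r(n)$ directly and instead run a van der Corput--style divisor-switching argument. Writing $y=n^{1/m}$, I would construct a map
$$
\Phi\colon\{(n_1,\dots,n_r):n_1\cdots n_r=n\}\longrightarrow\{\delta:\delta\mid n,\ \delta\le y\}
$$
from ordered factorizations to small divisors, and show that each fibre satisfies $\#\Phi^{-1}(\delta)\le(2d(\delta))^{(r-1)\frac{m\log m}{\log 2}}$. Granting this, the lemma follows at once by summing the fibre bound over the image:
$$
d_r(n)=\sum_{\substack{\delta\mid n\\ \delta\le y}}\#\Phi^{-1}(\delta)\le\sum_{\substack{\delta\mid n\\ \delta\le y}}(2d(\delta))^{(r-1)\frac{m\log m}{\log 2}}.
$$
Since the last coordinate $n_r=n/(n_1\cdots n_{r-1})$ is forced by the first $r-1$, it suffices to encode $r-1$ free factors, each contributing a single power $\frac{m\log m}{\log 2}$; this explains the shape of the exponent.

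To define $\Phi$ I would use van der Corput's device of reading off an initial segment of a sorted prime factorization. For each of the $r-1$ free factors $n_i$, list its prime factors in increasing order and take $\delta_i$ to be the longest initial segment whose product stays below a local threshold, the thresholds being chosen so that $\prod_i\delta_i\le y$; then set $\Phi(n_1,\dots,n_r)=\prod_i\delta_i$, a divisor of $n$ not exceeding $y$. Choosing initial segments of the \emph{sorted} factorizations is what makes the argument work: the complementary rough part $n_i/\delta_i$ is then built only from primes that are large relative to the local threshold, so that $n_i$ is pinned down by $\delta_i$ together with a strictly limited amount of additional data.

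The heart of the proof---and the step I expect to be the main obstacle---is the fibre estimate, i.e.\ reconstructing a factorization from $\delta$ at a cost of at most $(2d(\delta))^{\frac{m\log m}{\log 2}}$ per free factor. I would organize the reconstruction of each rough part into at most $\frac{m\log m}{\log 2}$ rounds, combining the $m$ multiplicative scales between $1$ and $n=y^{m}$ with a dyadic refinement, and arrange that each round is governed by the choice of a divisor of $\delta$, giving at most $d(\delta)$ possibilities per round (the extra factor $2$ bookkeeping the primes exceeding $y$). Two features demand genuine care. First, divisors $\delta$ lying close to the threshold $y$ are exactly where the rough parts are least constrained; but there the weight $(2d(\delta))^{\frac{m\log m}{\log 2}}$ is also largest and must be shown to absorb the fibre. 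Second, the primes exceeding $y$ cannot be packed into small divisors at all; here one uses that there are fewer than $m$ of them with multiplicity and tracks them separately. Once these points are settled, multiplying the per-factor counts over the $r-1$ free coordinates produces the claimed fibre bound, and with it the lemma.
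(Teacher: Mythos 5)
The paper offers no argument for \lemref{lem5}: its proof is literally ``See \cite{FI}'', so your proposal has to be measured against the Friedlander--Iwaniec argument. Their route is structurally simpler than yours: no fibre counting is needed. One proves the pointwise, \emph{existential} statement that every $n$ possesses a single divisor $\delta_0 \mid n$, $\delta_0 \le n^{1/m}$, with $d(n) \le (2d(\delta_0))^{\frac{m\log m}{\log 2}}$, combines it with the trivial bound $d_r(n)\le d(n)^{r-1}$ (choose $n_1\mid n$, then $n_2\mid n/n_1$, and so on; the last factor is forced, as you note), and then observes that the sum over all divisors $\delta\le n^{1/m}$ dominates its one term at $\delta_0$. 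Your map $\Phi$ with per-fibre bounds is a strictly stronger assertion than what is needed, and it is exactly at the fibre estimate --- which you yourself flag as the main obstacle and never carry out --- that the proposal has its gap: ``at most $\frac{m\log m}{\log 2}$ rounds, each governed by a divisor of $\delta$'' is numerology matching the exponent, not an argument.

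Worse, the specific construction you propose (longest initial segment of the increasingly sorted prime factorization) provably cannot satisfy your fibre bound, so the gap is not merely one of missing detail. Take $r=2$, let $p_1<\cdots<p_s$ be the first $s$ odd primes, put $M=p_1\cdots p_s$, let $a$ be maximal with $2^{a(m-1)}\le M$, and set $n=2^aM$. Then $2^a\le n^{1/m}=y<2^{a+1}$, so for \emph{every} subset $S\subseteq\{1,\ldots,s\}$ the divisor $n_1=2^a\prod_{i\in S}p_i$ has longest initial segment below $y$ equal to exactly $2^a$: the twos fill the threshold, and appending even the smallest $p_i\ge 3$ overshoots since $3\cdot 2^a>2^{a+1}>y$. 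Hence the fibre of $\Phi$ over $\delta=2^a$ has size at least $2^s$, where $s\asymp a(m-1)/\log a$, which is superpolynomial in $a$, while your permitted bound $(2d(\delta))^{\frac{m\log m}{\log 2}}=(2(a+1))^{\frac{m\log m}{\log 2}}$ is polynomial in $a$. (The lemma itself survives because other divisors of $n$ below $y$ carry the weight; your $\Phi$ never routes any mass to them.) The idea your sketch is missing is van der Corput's \emph{sparsification}: $\delta_0$ is built not from a prefix but by retaining roughly every $m$-th prime factor of $n$ taken in increasing order (with multiplicities suitably divided by $m$), so that each omitted factor is dominated by a retained one --- which is what forces $\delta_0^m\le n$ --- while $\delta_0$ still captures a $1/m$-proportion of the ``divisor entropy'' of $n$. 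The extremal configuration, many distinct primes each of multiplicity about $m$, is what produces the exponent: each such prime contributes a factor about $m=2^{\log_2 m}$ to $d(n)$, but only every $m$-th of them is visible in $d(\delta_0)$, and the factor $2$ absorbs the boundary primes exceeding $y$.
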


\smallskip
\noindent
{\bf Proof.}$~~~$
See \cite{FI}.

\begin{lem}\label{lem7}
Suppose $b(n) \geq 0$ for $n\geq 1$ and 
$$
D(s):= \sum_{n=1}^{\infty} \frac{b(n)}{n^s}
$$
converges for $s \in \bC$ with $\Re(s)>t \geq 0$. 
Then 
$$
\sum_{n \le x} b(n) \leq x^u~D(u)
$$
for any $u, x \in \bR$ with $u>t$ and $x \geq 1$.
\end{lem}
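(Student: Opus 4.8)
The plan is to exploit the single observation that for every $n \le x$ the factor $(x/n)^u$ is at least $1$, and to use this to compare the truncated sum with the full Dirichlet series. First I would record the two facts the hypotheses supply: since $u > t \ge 0$, the real point $s = u$ lies strictly to the right of the abscissa of convergence, so $D(u) = \sum_{n \ge 1} b(n)/n^u$ is a finite, nonnegative number; and since $u > 0$, the map $y \mapsto y^u$ is increasing on $[0,\infty)$.

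Next, for each integer $n$ with $1 \le n \le x$, I would use $x \ge 1$ together with $n \le x$ to deduce $x/n \ge 1$, and hence $(x/n)^u \ge 1$. Because $b(n) \ge 0$, multiplying this inequality by $b(n)$ gives the termwise bound $b(n) \le b(n)(x/n)^u$. Summing over $n \le x$ and pulling $x^u$ out of the sum yields
$$
\sum_{n \le x} b(n) \;\le\; \sum_{n \le x} b(n)\left(\frac{x}{n}\right)^u \;=\; x^u \sum_{n \le x} \frac{b(n)}{n^u}.
$$

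Finally, I would extend the truncated sum on the right to the full series. This is exactly where the nonnegativity of the $b(n)$ is indispensable: since each term $b(n)/n^u \ge 0$, the partial sum is dominated by the total, so $\sum_{n \le x} b(n)/n^u \le D(u)$, and the claimed bound $\sum_{n \le x} b(n) \le x^u D(u)$ follows at once.

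There is no genuine analytic obstacle here; the argument is the classical Rankin--Ingham device. The only points requiring care are checking the direction of each inequality: the condition $u > 0$ (forced by $u > t \ge 0$) is what makes $(x/n)^u \ge 1$, while the hypothesis $b(n) \ge 0$ simultaneously validates the termwise comparison and the passage from the partial sum to the convergent series $D(u)$. Since $x \ge 1$ and $u$ may be chosen freely with $u > t$, the resulting estimate is valid throughout the stated range.
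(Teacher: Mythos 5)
Your proof is correct and is essentially the paper's own argument: both use the Rankin--Ingham observation that $(x/n)^u \ge 1$ for $n \le x$ (since $u > t \ge 0$) to bound $\sum_{n \le x} b(n)$ by $\sum_{n\ge 1} b(n)(x/n)^u = x^u D(u)$, with nonnegativity of $b(n)$ justifying both the termwise comparison and the extension to the full series. You have merely made explicit the elementary checks the paper leaves implicit.
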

\begin{proof}
Note that for any real number $u >t$, we have
$$
\sum_{n \le x} b(n)  ~\leq~ \sum_{n=1}^{\infty} 
b(n) \left(\frac{x}{n}\right)^u
~\leq~  x^u D(u).
$$
\end{proof}
In most applications, we choose $u = t + (1/\log x) $.

\smallskip

\section{A group theoretic estimate}

\smallskip

For an odd prime $\ell$, let $\rm B_{\ell} := 
{\rm GL }_2(\F_{\ell})$ and
$$
\rm A_{\ell} := \left\{ \gamma \in {\rm B_{\ell}} 
\mid~  \text{ tr } \gamma = 0 \right\}.
$$
The conjugacy classes of $\rm B_{\ell}$ are one 
of the following four types:
\begin{eqnarray*}
\alpha_a := \Big({a \ \ \ 0 \atop 0\ \ \  a}\Big),  && \phantom{m}
\alpha_b := \Big({a \ \ \ 1 \atop 0\ \ \ a }\Big),    \\
\alpha_{a,\delta} := \Big({a \ \ \ 0 \atop 
0\ \ \ \delta}\Big) ~~ a \ne \delta, && 
\beta_{a,b}:= \Big({a \ \ \ \epsilon b \atop  
b \ \ \ \ a }\Big)~~ b \ne 0, 
\end{eqnarray*}
where $a,b, \delta \in \F_{\ell}^{\times}$ and  
$\{1, \sqrt{\epsilon} \}$ is a basis 
for $\F_{\ell}^2$ over $\F_{\ell}$. The number of 
elements in these classes are $1,  \ell^2-1, \ell^2 + \ell$ 
and $\ell^2 - \ell$ respectively
(see Fulton and Harris \cite{FH}, page 68 for details).
Hence the elements  of $\rm A_{\ell}$ 
come from the conjugacy classes 
$\Big({a \ \ \ \ 0 \atop 0 \ \ -a}\Big) $ and 
$\Big({0 \ \ \epsilon b \atop  b \ \ \   0 }\Big)$, 
where $a, b \in \F_{\ell}^{\times}$. 
Further, the elements 
$\Big({  a \ \ \ \ 0 \atop 0 \ \ -a}\Big )$ and 
$\Big({ -a  \ \ \ 0 \atop \ \ 0 \ \ \  a}\Big )$
belong to the same class and the
elements $\Big({0 \ \ \epsilon b \atop  b \ \ \ 0 }\Big)$
and $\Big({0 \ \ -\epsilon b \atop  b \ \ \ \ \ 0 }\Big)$
belong to the same class.
Therefore 
$$
|\rm A_{\ell}| =  [(\ell^2 + \ell) (\ell -1) 
+  (\ell^2 - \ell) (\ell -1)]/2 = {\ell^2}(\ell -1).
$$
Also
$|\rm B_{\ell}| =  |\{ \gamma \in {\rm A_{\ell}} 
\mid ~{\rm det}{\gamma}\in \F_{\ell}^{\times} \}|
= (\ell^2 - 1)(\ell^2 - \ell)$. Further, we calculate the 
cardinality of the sets 
${\rm B}_{\ell^n} := {\rm GL}_2(\bZ/{\ell}^n\bZ)$  and 
$$
{\rm A}_{\ell^n} :=  \left\{ \gamma \in {\rm B}_{\ell^n} \mid~ 
\text{ tr }\gamma = 0 \right\} 
$$
for all $n \ge 1$.
Note that  any $\Big({a \ \ b \atop c  \ \  \delta}\Big) 
\in {\rm B}_{\ell}$
lifts to 
$$ 
\Big({a + \beta_1 \ell \ \ \ b + \beta_2 \ell \atop c 
+ \beta_3 \ell  \ \ \  \delta + \beta_4 \ell }\Big) \in {\rm B}_{\ell^n},
$$ 
where $1 \le \beta_1,\beta_2, \beta_3, \beta_4 \le \ell^{n-1}$, 
$\beta_i \in \bZ$ for all $1 \leq i \leq 4$. Also any 
$\Big({a \ \ \  \ b \atop c  \ \  -a}\Big) \in {\rm A}_{\ell}$
lifts to 
$$ \Big({a + \beta_1 \ell \ \ \ \ \ \ b + \beta_2 \ell \atop c
 + \beta_3 \ell  \ \  -a - \beta_1 \ell }\Big) \in {\rm A}_{\ell^n}
$$ 
for any choice of $\beta_1,\beta_2, \beta_3 \in \bZ$ with
$ 1 \le \beta_1,\beta_2, \beta_3 \le \ell^{n-1}$. 
Since the maps ${\rm B}_{\ell^n} \to  {\rm B}_{\ell}$ and  
${\rm A}_{\ell^n} \to {\rm A}_{\ell}$
are surjective, it is easy to see from the 
above observations that 
$|{\rm B}_{\ell^n}| = {\ell}^{4(n-1)} |{\rm B}_{\ell}|$
and $|{\rm A}_{\ell^n}| = {\ell}^{3(n-1)} |{\rm A}_{\ell}|$.

\smallskip

Next for an even integer $k>2$, we calculate the cardinality of the sets
\begin{eqnarray*}
C_{\ell}& :=& \left\{ \gamma \in {\rm B_{\ell}} 
~\mid~  \det \gamma \in (\bF_{\ell}^{\times})^{k-1} \right\} \\
\text { and } \phantom{mm}
D_{\ell} &:= & \left\{ \gamma \in C_{\ell} 
~|~ \text{ tr }\gamma = 0 \right\}.
\end{eqnarray*}
Writing gcd$(\ell -1, k-1):= d$, one has
\begin{eqnarray*}
C_{\ell} 
&=&  
\left\{ \gamma \in {\rm B_{\ell}} 
~\mid~  \det \gamma \in (\bF_{\ell}^{\times})^{d}\right\} \\
&=&  
\left\{ \gamma \in {\rm B_{\ell}} 
~\mid~  (\det \gamma)^{(\ell-1)/d}  \equiv 1 \!\!\!\pmod{\ell} \right\}. 
\end{eqnarray*}
Consider the surjective group homomorphism
$$
\phi: GL_2(\bF_{\ell})  \to (\bF_{\ell}^{\times})^{(\ell-1)/d}
$$
which sends $\gamma \mapsto (\det \gamma)^{(\ell-1)/d}$.
From the previous discussions, it is clear that 
$\text{ ker } \phi = C_{\ell}$. 
Hence
$$
|C_{\ell}| = \frac{| B_{\ell}|}{|(\bF_{\ell}^{\times})^{\frac{\ell-1}{d}}|}.
$$
But 
$$
|(\bF_{\ell}^{\times})^{\frac{\ell-1}{d}}|
= \frac{|\bF_{\ell}^{\times}|}
{\left|\left\{ x \in \bF_{\ell}^{\times} 
\mid x^{(\ell-1)/d} \equiv 1\!\!\!\pmod{\ell}\right\}\right|}
= \frac{\ell -1}{(\ell -1)/d} = d.
$$ 
Therefore
$$
|C_{\ell}| = \frac{(\ell^2 -1) (\ell^2 - \ell)}{d}.
$$
The elements of $D_{\ell}$ come from the conjugacy classes
$\alpha_{a,-a}$ 
with $-a^2 \in (\F_{\ell}^{\times})^{k-1}$ and 
$\beta_{0,a}$ with
$- \epsilon a^2 \in (\F_{\ell}^{\times})^{k-1}$. 
Let $g$ be the primitive root of $\F_{\ell}^{\times}$.
We would like to find the cardinality of the sets
\begin{eqnarray}\label{card}
 \{~  a  \mid -a^2 \equiv w^{k-1}\!\!\! \pmod{\ell} 
\text{ for some } w \in \F_{\ell}^{\times} \}
\end{eqnarray}
and
\begin{eqnarray}\label{card2}
 \{~  a  \mid -\epsilon a^2 \equiv w^{k-1} \!\!\!\pmod{\ell} 
\text{ for some } w \in \F_{\ell}^{\times} \} . 
\end{eqnarray}
Write $a = g^r, -1 = g^{\frac{\ell -1}{2}}$ and $w =  g^s$, 
where $0 \leq r,s \leq \ell -1$. Then the cardinality
of \eqref{card} is equal to the number of solutions $r$
for which 
\begin{equation}\label{sol}
\frac{\ell -1}{2} + 2r \equiv s(k-1) \!\!\!\pmod{\ell -1} ,
\end{equation}
where $0 \leq s \leq \ell -1$. This congruence has 
a solution $\{r_0, s_0\}$
if and only if $2r_0 \equiv -\frac{\ell -1}{2} \pmod{d}$.
Since $(2,d)=1$, the last congruence has a unique solution
in $r_0$.  Hence the number of $r$'s which are  
solutions of \eqref{sol} is $\frac{\ell - 1}{d}$. 
Note that if $a$ is in the set \eqref{card},  then
so is $-a$ and that
$\alpha_{a, -a} = \alpha_{-a,a}$.
Hence 
$$
|\{~  a  \mid -a^2 \equiv w^{k-1}\!\!\! \pmod{\ell} \text{ for some } 
w \in \F_{\ell}^{\times} \} | = \frac{\ell-1}{2d}. 
$$
Again writing $a = g^r,  -\epsilon = g^{t_0}$ and $w =  g^s$, 
where $0 \leq r,s \leq \ell -1$ and
solving the congruence
$$
t_0 + 2r \equiv s(k-1) \!\!\!\pmod{\ell -1} 
$$
we show that the cardinality of the 
set \eqref{card2} is $\frac{\ell -1}{2d}$.
Hence 
$$
|D_{\ell}| = \frac{\ell -1}{2d} (\ell^2  + \ell) + 
\frac{\ell -1}{2d} (\ell^2  - \ell)
  =  \frac{\ell^2 (\ell- 1)}{d} .
$$
Finally, we calculate for $n \geq 1$, the cardinality of the sets 
$$
C_{\ell^n} := \{ \gamma \in B_{\ell^n} \mid \gamma\!\!\!\! \pmod{\ell} \in C_{\ell} \}
$$
and 
$$
D_{\ell^n} = \{ \gamma \in C_{\ell^n} \mid \text{ tr }\gamma = 0  \}.
$$ 
Clearly, $|C_{\ell^n} | = {\ell}^{4(n-1)} |{\rm C}_{\ell}|$
and  $|D_{\ell^n}|  = {\ell}^{3(n-1)} |{\rm D}_{\ell}|$.

\smallskip

\section{Proof of the theorem}

\begin{proof}
Suppose that $f$ is a normalized Hecke eigenform of
weight $k$ for $\Gamma_0(\rm N)$ and $\delta$
is a large positive integer with the property
that if $p |\delta$, then $p \gg 1$. 
It follows from the previous two sections 
that for such $\delta$, we have
\begin{eqnarray}\label{estimate}
h(\delta) = \prod_{\ell^n||\delta} h(\ell^{n})
= \prod_{\ell^n||\delta} 
\frac{{\ell}^{3(n-1)}\ell}{{\ell}^{4(n-1)}(\ell^2 - 1)} 
= \prod_{\ell^n||\delta} 
\frac{\ell}{{\ell}^{n-1}(\ell^2 - 1)}.
\end{eqnarray}
Clearly when $\delta = \ell$ a prime, 
$h(\ell) \asymp \frac{1}{\ell}$ for sufficiently
large $\ell$. For a lower bound, note that 
\begin{eqnarray*}
\sum_{p \leq x \atop a(p) \ne 0}d(a(p)) 
~=~ \sum_{p \leq x \atop a(p) \ne 0} \sum_{\delta | a(p) } 1 
&\ge & \sum_{p \le x \atop a(p) \ne 0}
\sideset{}{}{\sum}_{\substack{\delta < x^{1/12}\atop 
a(p) \equiv 0\!\!\!\!\!\pmod{\delta}}} 1  \\
&\geq & {\sum}_{\substack{\delta < x^{1/12}}}^* \pi(x,\delta), \nonumber
\end{eqnarray*}
where $\sum^*$ varies over all those natural numbers $\delta$ whose prime divisors
are sufficiently large.  Hence by \lemref{lemma3}, we have
\begin{eqnarray*}
\sum_{p \leq x \atop a(p) \ne 0}d(a(p)) 
&\geq & \pi(x) {\sum}_{\substack{\delta < x^{1/12}}}^* h(\delta)
~+~  O(x^{1/2} {\sum}_{\substack{\delta < x^{1/12}}}^* \delta^3 \log\delta) 
~+~  O(x^{5/6}) \nonumber \\
&= & \pi(x) \sum_{\delta < x^{1/12} \atop \ell | \delta \iff \ell \gg 1} h(\delta) 
~+~ O( x^{5/6} \log x)  ~\gg ~  x.
\end{eqnarray*}
For an upper bound, we can use \lemref{lem5} to get 
\begin{eqnarray*}
\sum_{p \le x \atop a(p) \ne 0 } d(a(p)) 
&\ll& \sum_{ p\le x \atop a(p) \ne 0} 
\sum_{\delta|a(p) \atop \delta \le {|a(p)|^{1/m}}}
{d(\delta)}^{\frac{m \log m}{\log 2}}.
\end{eqnarray*}
We choose $m$ so that $m > 7k$.
Write $c=\frac{m \log m}{\log 2}$.  
As $|a(p)|  < 2 p^{k/2} $, we have
\begin{eqnarray*}
\sum_{p \le x \atop a(p) \ne 0} d(a(p)) & \ll  & 
\sum_{ p \le x \atop a(p) \ne 0}
\sum_{ \delta|a(p) \atop \delta < {x^{1/12}}} d(\delta)^{c} \\ 
& = & \sum_{\delta < x^{1/12}} d(\delta)^{c}~ \pi(x, \delta) \\
& = &  \sum_{ \delta < x^{1/12} } d(\delta)^{c} \left\{ h(\delta)\pi(x) 
+ O\left( \delta^3 x^{1/2} \log(\delta{\rm N} x)\right) 
+ O(x^{3/4})\right\} 
\end{eqnarray*}
by using \lemref{lemma3}.  
Note that when $\delta$
has small prime divisors, the
value of $h(\delta)$ is less than
the value of the right hand side of \eqref{estimate}.
Hence for an upper bound we can use the
right hand side of \eqref{estimate}
for all values of $\delta$.

Consider the Dirichlet series
\begin{eqnarray*}
{\rm F}(s):= \sum_{n \ge 1} \frac{d(n)^{c} h(n)}{n^s} 
= \zeta(s+1)^{2^c} g(s), 
\end{eqnarray*}
where $g(s)$ is analytic for $\Re(s) \geq 0$. Thus
by \lemref{lem7}, we have
$$
\sum_{n \leq z} d(n)^c h(n) \leq z^u F(u),
$$
for any real number $u > 0$. We choose $u = 1/ \log z$
so that 
$$ 
\sum_{n \leq z} d(n)^c h(n) \ll F\left(\frac{1}{\log z}\right)
$$
Since
$$
|\zeta(s)| \leq \frac{1}{s-1} +   L,
$$ 
where $L$ is an absolute constant, we see easily
$$ 
\sum_{n \leq z} d(n)^c h(n) \ll (\log z)^{2^c}.
$$
Again consider the Dirichlet series
\begin{eqnarray*}
{\rm G}(s):= \sum_{n \ge 1} \frac{d(n)^{c}}{n^s} 
= \zeta(s)^{2^c} g_1(s), 
\end{eqnarray*}
where $g_1(s)$ is analytic for $\Re(s) \geq 1$. 
Thus by \lemref{lem7}, we have
$$
\sum_{n \leq z} d(n)^c  \leq z^u G(u),
$$
for any real number $u > 1$. We choose $u = 1/ \log z + 1$
so that 
$$ 
\sum_{n \leq z} d(n)^c   \ll z G\left(\frac{1}{\log z} + 1 \right)
$$
and hence
$$ 
\sum_{n \leq z} d(n)^c   \ll z (\log z)^{2^c}.
$$
This implies that
\begin{eqnarray*}
\sum_{p \le x \atop a(p) \ne 0} d(a(p)) 
~\ll~   x (\log x)^{2^c -1}   +  O\left( x^{5/6} (\log x)^{2^c + 1}\right) 
~\ll~  x (\log x)^{2^c -1}.
\end{eqnarray*}
This completes the proof of the theorem.
\end{proof}

\section{Concluding remarks}

We hasten to remark that the full strength of the GRH is not essential
if one only wants an estimate of the form $x(\log x)^A$ for some $A$.
Indeed, if one assumes a quasi-GRH (that is, the assumption for any
given $\epsilon >0$,  the
Artin $L$-series have no zero in the region $\Re(s) > 1-\epsilon$),
then one can deduce a result of the form
$$\sum_{p \leq x, \atop a(p) \ne 0 } d(a(p)) \ll x (\log x)^A, $$
for some  $A$ depending on $\epsilon$.  This is not difficult to see.
Indeed, a version of \lemref{lemma2} with an estimate of the form $x^{1-\epsilon}$
can easily be deduced under such a hypothesis.  In addition, one can choose
$m$ appropriately in \lemref{lem5} so as to ensure that the subsequent sums in 
the proof of the main theorem can be reasonably estimated.  
It is also evident that the lower bound can be deduced from a version
of the Chebotarev density theorem derived from a quasi-GRH.  We leave the
details to the reader.  All of this analysis suggests the following
question.  Is it reasonable to expect that there exist constants $B$ and
 $v$ such that we have an asymptotic formula of the type
$$
\sum_{p \leq x} d(a(p)) ~\sim~  Bx(\log x)^v 
$$
as $x \to \infty$? Perhaps $v=0$.

\medskip
\noindent
{\bf Acknowledgments :}
The authors thank Kumar Murty and the referee for making some valuable suggestions
on an earlier draft of this paper.

\end{document}